\documentclass[11pt]{article}
\usepackage[mathvars]{brsheader} % Required for inserting images
\abbreviateBBMathFont

\newcommand{\xto}[1]{\xrightarrow{#1}}
\newcommand{\MRto}{\xrightarrow[\text{MR}]{}}

\newtheorem{innerreplemma}{Lemma}
\newenvironment{replemma}[1]
  {\renewcommand\theinnerreplemma{#1}\innerreplemma}
  {\endinnerreplemma}

\DeclareMathOperator{\MR}{MR}
\DeclareMathOperator{\R}{R}

\title{Products of simplices are canonically Ramsey}
\author{Benedict \textsc{Randall Shaw}\footnotemark[1]}
\date{July 2026}

\begin{document}

\maketitle

\renewcommand{\thefootnote}{\fnsymbol{footnote}}
\footnotetext[1]{\href{mailto:bwr26@cam.ac.uk}{bwr26@cam.ac.uk}, Department of Pure Mathematics and Mathematical Statistics (DPMMS), University of Cambridge, Wilberforce Road, Cambridge, CB3 0WA, United Kingdom}

\begin{abstract}
A set of points \(C \subset \BBR^n\) is called \emph{canonically Ramsey} if there is some set of points \(S\subset \BBR^{n'}\) such that any colouring of \(S\), using any number of colours, must contain either a monochromatic copy of \(C\) or a rainbow copy of \(C\). Mao, Ozeki, and Wang \cite{mao2022euclideangallairamseytheory} introduced this notion, showing that 30-60-90 triangles are canonically Ramsey. Since then, various other canonically Ramsey configurations have been identified. The author \cite{randallshaw2026cuboidscanonicallyramsey} showed that cuboids are canonically Ramsey, while Ge, Shu, Xu, and Yu \cite{ge2026simplicesexhibitcanonicalramsey} recently showed that simplices are canonically Ramsey. We extend both of these results, proving that all products of simplices are canonically Ramsey.
\end{abstract}

\section{Introduction}
We consider finite sets of points, or \emph{configurations}, in some Euclidean space \(\BBR^n\). For \(n<m\), we will identify \(\BBR^n\) with a subset of \(\BBR^m\), so that when discussing a configuration, we may disregard the dimension of the ambient space. We will also identify \(\BBR^n\times \BBR^m\) with \(\BBR^{n+m}\) in the usual way, and thus discuss the product of two configurations as a configuration in its own right.

We will be particularly concerned with \emph{nondegenerate simplices}, configurations which are affinely independent---that is to say, sets comprising \(n\) points which are not all contained in some \((n-2)\)-dimensional hyperplane. For ease of reading, we will use \emph{simplex} to mean a nondegenerate simplex.

For any \(n\) and sets of points \(S,C\subset \BBR^n\), we write
\[S\xto{r} C\]
if any \(r\)-colouring of \(S\) contains a \emph{monochromatic} copy of \(C\)---that is to say, a set of points congruent to \(C\) all of which are assigned the same colour. A compactness argument shows that \(\BBR^n\xto{r} C\) if and only if some finite set of points \(S\subset \BBR^n\) has \(S\xto{r} C\). We say a configuration \(C\) is \emph{Ramsey} if, for any \(r\), there is some \(n\) such that \(\BBR^n\xto{r} C\). This notion was introduced by Erd\H{o}s, Graham, Montgomery, Rothschild, Spencer, and Straus \cite{erdos1973}, who showed that all Ramsey sets are spherical, and that the family of Ramsey sets is closed under products---in particular, that it includes all \emph{cuboids}, sets of the form \(\{0,b_1\}\times\dots\times\{0,b_k\}\). Frankl and R\"odl \cite{frankl1990} later showed that simplices were Ramsey, in fact proving a stronger version of this notion. However, the question of exactly which sets are Ramsey remains open.

We may ask what happens, then, if we try to colour Euclidean space without any restriction on our set of colours. Certainly we cannot hope to force a monochromatic copy of any configuration of more than one point, as it would simply be possible to assign every point a different colour. Instead, we ask whether any colouring must contain either a monochromatic copy of our configuration, or a \emph{rainbow} copy of our configuration---that is to say, a set of points congruent to our configuration which all receive different colours. Thus for any set of points \(S\) and configuration \(C\), we write
\[S\MRto C\]
if every colouring of \(S\) contains either a monochromatic copy of \(C\) or a rainbow copy of \(C\). If there is some \(n\) such that \(\BBR^n\MRto C\), we say that \(C\) is \emph{canonically Ramsey}.

This notion was introduced by Mao, Ozeki, and Wang \cite{mao2022euclideangallairamseytheory}, who showed that the 30-60-90 triangle is canonically Ramsey. Cheng and Xu \cite{cheng2023euclideangallairamseyvariousconfigurations} showed that squares, right-angled triangles, and a large family of other triangles and simplices are canonically Ramsey. Geh\'er, Sagdeev, and T\'oth \cite{geher2026} showed that \emph{hypercubes}---sets of the form \(\{0,b\}^k\)---are canonically Ramsey. Fang, Ge, Shu, Xu, Xu, and Yang \cite{fang2025canonicalramseytrianglesrectangles} then showed that all triangles, and all rectangles, as well as a larger family of simplices with four points than previously known, are canonically Ramsey, and conjectured that all Ramsey sets should be canonically Ramsey.

The author \cite{randallshaw2026cuboidscanonicallyramsey} showed that all cuboids are canonically Ramsey, which itself implies that a further class of simplices are canonically Ramsey. Recently, Ge, Shu, Xu, and Yu \cite{ge2026simplicesexhibitcanonicalramsey} proved that in fact all simplices are canonically Ramsey. Our theorem extends both of these results:

\begin{theorem}\label{thm:prodsimpMR}
    For any \(k\), and any simplices \(S_1,\dots,S_k\), their product \(S_1\times\dots\times S_k\) is canonically Ramsey.
\end{theorem}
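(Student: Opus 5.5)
We argue by induction on \(k\), the case \(k=1\) being the theorem of Ge, Shu, Xu, and Yu \cite{ge2026simplicesexhibitcanonicalramsey} that simplices are canonically Ramsey. For the inductive step write \(C=S_1\times\dots\times S_{k-1}\) and \(D=S_k\). We aim to show that if \(\BBR^{n}\MRto C\) and \(D\) is a simplex, then \(\BBR^{N}\MRto C\times D\) for suitable \(N\). We realise the host set as a product \(X\times Y\) with \(X\subset\BBR^{n}\) finite and \(X\MRto C\), and \(Y\) a large finite set lying in orthogonal coordinates. Any copy \(C'\times D'\) with \(C'\subset X\) a copy of \(C\) and \(D'\subset Y\) a copy of \(D\) is then a copy of \(C\times D\).

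Given an arbitrary colouring \(\chi\) of \(X\times Y\), we assign to each \(y\in Y\) the induced colouring \(\chi_y\) of the fibre \(X\times\{y\}\). Since \(X\MRto C\), each fibre contains a copy \(C_y\) of \(C\) that is either monochromatic or rainbow. Because \(X\) is finite, the pair ``(\(C_y\), type)'' takes at most \(M\) values, where \(M\) depends only on \(X\). We take \(Y\) with \(Y\xto{M} D\), using that simplices are Ramsey by Frankl and R\"odl \cite{frankl1990}. This yields a copy \(D'\subset Y\) of \(D\) on which the same copy \(C'\) of \(C\) and the same type are selected in every fibre. We are then left with a grid \(C'\times D'\) in which every row \(C'\times\{y\}\), \(y\in D'\), is monochromatic, or every row is rainbow.

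The main obstacle is that the rows need not be coordinated. Monochromatic rows may have different colours, and rainbow rows may share colours. We therefore strengthen what we track: instead of using only the \(\MR\) property on rows, we also demand a canonical pattern in the columns. In the monochromatic case, define the colouring \(\psi(y)=\chi(C'\times\{y\})\) of \(D'\), or better of all of \(Y\) after fixing \(C'\) via the reduction above. If \(\psi\) has a monochromatic copy of \(D\), the whole grid is monochromatic. If \(\psi\) has a rainbow copy, the grid is not rainbow, since entries within a row repeat. So the monochromatic-rows case forces us to use a stronger host for \(D\), namely a canonical Ramsey statement in which the pattern on \(D\) is an arbitrary equivalence relation that is ``canonical'' in the Erd\H{o}s--Rado sense.

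My plan is therefore to prove a stronger inductive hypothesis. For each configuration \(P=S_1\times\dots\times S_j\) we show that every colouring of a suitable host contains a copy of \(P\) on which the colouring is canonical: two points receive the same colour if and only if they agree in a prescribed subset \(I\subseteq\{1,\dots,j\}\) of coordinates. The cases \(I=\emptyset\) and \(I=[j]\) correspond to monochromatic and rainbow respectively. We expect the simplex version of this, \(j=1\), which is just mono/rainbow, to follow from \cite{ge2026simplicesexhibitcanonicalramsey}. The product step then combines the fibre argument above with a second product-Ramsey step over the colour-identity pattern between rows, in the style of the cuboid argument of \cite{randallshaw2026cuboidscanonicallyramsey}. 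Collisions between rows must be eliminated by finiteness and Ramsey pigeonholing, and doing this is where the real work lies. Finally, the general canonical copy of \(S_1\times\dots\times S_k\) yields either the monochromatic or the rainbow copy by restricting to a suitable sub-product. This is exactly the subtle point: a mixed pattern only gives a monochromatic or rainbow copy of a factor. So the final step needs an extra embedding trick, realising a copy of \(S_1\times\dots\times S_k\) inside a larger product \(\prod S_i^{(m)}\) whose copies in the ``forgotten'' coordinates can be rearranged to lie in the remembered ones. We expect this to be the hardest step.
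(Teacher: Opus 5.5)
Your proposal does not yet contain a proof. It stops exactly where the difficulty begins.

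\textbf{What works, and what is deferred.} The fibre argument is fine: it gives a grid \(C'\times D'\) whose rows are all monochromatic or all rainbow, and you correctly see that the rows are uncoordinated. But both steps that would fix this are deferred, as ``where the real work lies'' and ``the hardest step''. These are eliminating colour collisions between rows, and turning a canonical copy with a mixed pattern \(I\) into a monochromatic or rainbow copy.

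Your first formulation also asks for too much. It uses only \(\BBR^n\MRto C\). Applied repeatedly with \(D=\{0,1\}\), it would make \(C\times\{0,1\}^m\) canonically Ramsey for every \(m\), and hence every canonically Ramsey set Ramsey, which is open. So the induction must carry more information about \(C\).

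\textbf{The last step of the canonical-pattern plan fails as described.} Suppose the enlarged factors are simplices \(T_1,T_2\) and the canonical pattern is \(I=\{1\}\), so \(\chi(a,b)\) is an injective function of \(a\). Any copy of \(S_1\times S_2\) with \(|S_1|,|S_2|\ge 2\) contains four distinct points with \(z_{11}-z_{12}=z_{21}-z_{22}\).
\begin{itemize}
\item In a rainbow copy, the projections of these points to \(T_1\) would be four distinct vertices of \(T_1\) satisfying a nontrivial affine relation, which is impossible.
\item In a monochromatic copy, all points would lie in the simplex \(\{a\}\times T_2\), which contains no four affinely dependent points.
\end{itemize}
So such a mixed canonical copy contains neither kind of copy of \(S_1\times S_2\), and no rearrangement inside it can help. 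The host has to be built so that mixed outcomes never arise.

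\textbf{The missing idea is to make the statement asymmetric and keep the rainbow target fixed.} The paper first proves, for every single simplex \(A\), that some product of simplices \(C(A)\) satisfies \(C(A)\MRto(A,\,S_1\times\dots\times S_k)\). Only then does it assemble the monochromatic pieces via Lemma~\ref{lem:monoprod}. Suppose \(C\MRto(A,Y)\) and the Ramsey configuration \(C'\) satisfies \(C'\MRto(A',Y)\) with the \emph{same} \(Y\). In \(C\times D\) with \(D\xto{r}C'\), one finds a copy \(D'\) of \(C'\) over which all rows share a common monochromatic \(X\cong A\). Every column \(\{x\}\times D'\) then carries the same colouring. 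A rainbow outcome in such a column is already a rainbow \(Y\), not merely a rainbow \(S_k\); otherwise one gets a monochromatic \(A\times A'\). This is exactly what dissolves your ``monochromatic rows of different colours'' obstacle.

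The rainbow-rows case is handled by two further devices you do not have:
\begin{itemize}
\item The column simplex is chosen to be \(\MR^n(A,S_k)\), which is legitimate because \(\MR(A,\cdot)\) may be taken to be a simplex. Then the \(n\) columns over a properly coloured row can be made rainbow one at a time, unless a monochromatic \(A\) appears (Lemma~\ref{lem:monovsproper}).
\item The auxiliary graph \(G(X)\), of maximum degree at most \(|P|(|P|-1)\), is combined with a simplex Ramsey witness \(\R(S_k,rm)\) to upgrade proper colourings to rainbow ones (Lemma~\ref{lem:properimpliesrainbow}). This is precisely the collision elimination you left open.
\end{itemize}
Since each \(C(S_i)\) is a product of simplices, and hence Ramsey, Lemma~\ref{lem:monoprod} can then be applied \(k-1\) times.
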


That is to say, products of simplices are canonically Ramsey. This result includes all currently known canonically Ramsey sets. Note that this also implies that all subsets of products of simplices are canonically Ramsey.

However, it is not yet known whether the entire family of canonically Ramsey sets is closed under products. This question is of some interest, as it would imply that all canonically Ramsey sets would be Ramsey for the following reason: any canonically Ramsey set \(C\) would be contained in arbitrarily large canonically Ramsey sets---that is to say, for any \(r\), some canonically Ramsey \(C'\) containing \(C\) would have \(|C'|>r\). In particular, then any \(r\)-colouring of a set \(S\) witnessing that \(C'\) is canonically Ramsey would have to contain a monochromatic copy of \(C'\), as \(r\) would not be enough colours for a rainbow copy of \(C'\). Thus we would have \(S\xto{r} C\), and since \(r\) was arbitrary, this would imply that \(C\) itself is Ramsey.

Before we outline our argument, we introduce a slightly more general notation, as follows: for a set of points \(S\), and configurations \(C,C'\), we write
\[S\MRto (C,C')\]
if any colouring of \(S\) admits either a monochromatic copy of \(C\), or a rainbow copy of \(C'\).

Our argument divides into three main stages. First, we show that for any simplices \(A,S_1,\dots,S_k\), there is some product of simplices \(C\) such that any colouring of \(C\) either admits a monochromatic copy of \(A\), or a \emph{properly coloured} copy of \(S_1\times\dots\times S_k\)---that is to say, one in which, for each \(i\), each copy of \(S_i\) of the form \(\{s_1\}\times\dots\times\{s_{i-1}\}\times S_i\times \{s_{i+1}\}\times\dots\times\{s_k\}\) is rainbow. This is achieved by iteratively applying the result of \cite{ge2026simplicesexhibitcanonicalramsey} that all simplices are canonically Ramsey.

Secondly, we show that for any simplices \(S_1,\dots,S_k\), there exist simplices \(S'_1,\dots,S'_k\) such that any proper colouring of \(S'_1\times\dots\times S'_k\) admits a monochromatic copy of \(S_1\times\dots\times S_k\). Here each \(S'_i\) will itself be a Ramsey witness for \(S_i\), and so it will be important to use the fact proven in \cite{ge2026simplicesexhibitcanonicalramsey} that we may choose Ramsey witnesses for simplices to be simplices themselves.

Finally, we note that the first two stages allow us to construct a product of simplices \(C\) such that
\[C\MRto (A,S_1\times\dots\times S_k).\]
We show that whenever \(C\MRto(A,Y)\) and \(C'\MRto(A',Y)\), for some \(A,A',C,C',Y\) such that \(C'\) is Ramsey, then we may construct a configuration \(C''\) such that \(C''\MRto(A\times A', Y)\). In particular, since the configuration \(C\) we constructed earlier in our argument is a product of simplices, it is certainly Ramsey. Applying this \(k-1\) times, we complete our proof.

\section{Overview}

We will require several known results, which we now introduce before describing our argument:
\begin{theorem}[\cite{frankl1990}]\label{thm:simplexR}
Every simplex is Ramsey.
\end{theorem}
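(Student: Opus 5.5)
Theorem~\ref{thm:simplexR} is the Frankl--R\"odl theorem, so the task is to sketch how one proves that every nondegenerate simplex is Ramsey. The strategy is to reduce the geometric problem to a combinatorial one. We find a high-dimensional family of points, indexed by combinatorial objects, in which congruent copies of the simplex correspond to a rich structured family of subsets. Then a strong product-type Ramsey theorem (a hypergraph or partite statement) forces a monochromatic structured subset, hence a monochromatic congruent copy.

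Concretely, for a simplex \(T=\{t_0,\dots,t_{d}\}\) the plan is as follows. First, use that a simplex is determined up to congruence by its squared distances \(|t_i-t_j|^2\). Choose coordinates so that these distances can be realised as a positive combination of ``elementary'' distance patterns of the form arising from points \(\{0,1\}\)-vectors (or vectors with a bounded number of prescribed coordinate values) in a large space \(\BBR^N\). This is possible since every nondegenerate simplex has a Gram matrix that is positive definite. Second, consider the set \(S\subset\BBR^{N\cdot m}\) of vectors \(x\) whose coordinate multiset is fixed: say \(x\) has exactly \(a_j\) coordinates equal to \(c_j\), for suitably chosen values \(c_j\) and multiplicities \(a_j\). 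Third, show that any \((d+1)\)-tuple of such vectors whose pairwise ``difference patterns'' (how many coordinates take each pair of values \((c_j,c_{j'})\)) are prescribed is congruent to \(T\). Such tuples are exactly ordered \((d+1)\)-tuples of set partitions, or of sets, in a fixed relative position. So an \(r\)-colouring of \(S\) induces an \(r\)-colouring of \(\ell\)-subsets (or structured partitions) of \([n]\), and it suffices to find a monochromatic configuration in prescribed relative position.

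The final step, and the main obstacle, is this combinatorial Ramsey statement: every \(r\)-colouring of the relevant structures must contain a monochromatic family of \(d+1\) of them in a prescribed pattern of intersections. For this I would invoke a partite Ramsey-type theorem, e.g.\ a Hales--Jewett or Graham--Rothschild style argument, or Frankl--R\"odl's own strong partite lemma for hypergraphs. The prescribed intersection pattern corresponds to a fixed \((d+1)\)-partite configuration, and the ``amalgamation'' (partite construction) technique yields a host structure that is Ramsey for it. The delicate point is ensuring that the realisation in the second step has all the simplex's distances arising from such a pattern with \emph{nonnegative} integer counts. To handle this I would rescale and use rational approximations, perturbing the host values \(c_j\) so that the Gram matrix equations become an exactly solvable linear system, using nondegeneracy to guarantee a solution with positive multiplicities.
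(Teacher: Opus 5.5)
The paper gives no proof of this theorem; it is quoted from Frankl and R\"odl and used as a black box, so your sketch has to stand on its own. Embedding \(T\) in a set of vectors with a fixed coordinate multiset is not where the difficulty lies: nondegeneracy is what makes an exact embedding possible. The genuine gap is the final combinatorial step. You state it for an arbitrary prescribed pattern and discharge it by citing partite, amalgamation, Hales--Jewett or Graham--Rothschild type theorems, and as stated it is false.

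Here is a counterexample. Let \(V_N\subset\BBR^N\) consist of the vectors with one coordinate equal to \(1\), one equal to \(3\), and all others \(0\). Prescribe the pattern ``same support, values swapped'', that is \(\{e_p+3e_q,\,3e_p+e_q\}\), at distance \(2\sqrt2\). A direct check shows the possible squared distances in \(V_N\) are \(0,2,8,14,18,20\), and \(8\) occurs only for swapped pairs. Now colour each vector according to whether its \(1\)-entry precedes its \(3\)-entry. This is a \(2\)-colouring of \(V_N\), for every \(N\), with no monochromatic pair in the prescribed position. Passing to a cleverly built host subset of \(V_N\) cannot help. The segment is of course Ramsey, but not by this route.

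The obstruction is general, because a colouring may use the order of the coordinates. When supports have bounded size, colouring each vector by its order type (the word of its nonzero values read left to right) uses boundedly many colours. So the only templates that can be forced monochromatic are those whose \(d+1\) vertex vectors all have the same order type under a single linear order of the coordinates. This matches the fact that the partite construction and Graham--Rothschild only deliver copies isomorphic as \emph{ordered} structures. For such order-uniform templates, plain Ramsey's theorem on \(K\)-subsets of \([N]\) already finishes the argument.

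So your plan moves the entire content of the theorem into showing that every simplex admits an order-uniform realisation. Nothing in your steps addresses this: positive definiteness of the Gram matrix, rational approximation and positive multiplicities do not. Hales--Jewett does not rescue the step either. A monochromatic combinatorial line with \(a\) active coordinates is a copy of \(\sqrt a\,T\) with \(a\) uncontrolled, so you only get a similar copy, not a congruent one.

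Frankl and R\"odl avoid this by working with patterns whose entries all grow linearly in the dimension, where order-type colourings no longer use boundedly many colours. They then apply their forbidden-intersection theorem, which is a density statement; that is also why they obtain \(\BBR^n\xto{(1+\varepsilon)^n}T\). Without either an order-uniform embedding lemma for simplices or a density input of this kind, your proposal does not prove the theorem.
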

Indeed, it is shown in \cite{ge2026simplicesexhibitcanonicalramsey} that we may assume that the configuration witnessing this is itself a simplex. Thus, for each simplex \(S\), let \(\R(S,r)\) be a simplex that witnesses this for \(r\)-colourings---so \(\R(S,r)\xto{r} S\).

\begin{theorem}[\cite{erdos1973}]\label{thm:productR}
    The product of two Ramsey sets is Ramsey.
\end{theorem}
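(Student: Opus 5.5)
The statement is the classical product theorem of Erd\H{o}s et al. I would prove it by combining two finite Ramsey witnesses into a product witness, using the compactness remark from the introduction and a "colour-pattern" colouring argument.

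Let \(A\) and \(B\) be Ramsey and fix \(r\). We need a finite set \(T\) with \(T\xto{r} A\times B\).

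First, choose \(n\) and a finite \(S_B\subset\BBR^n\) with \(S_B\xto{r} B\). Set \(N=|S_B|\), so there are at most \(r^{N}\) functions \(S_B\to[r]\). Second, choose \(m\) and a finite \(S_A\subset\BBR^m\) with
\[S_A\xto{r^{N}} A.\]
Both choices are possible by the definition of Ramsey together with compactness.

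Take \(T=S_A\times S_B\subset\BBR^{m+n}\). Given an \(r\)-colouring \(\chi\) of \(T\), define a colouring \(\chi'\) of \(S_A\) by
\[\chi'(x)=\bigl(\chi(x,y)\bigr)_{y\in S_B},\]
which uses at most \(r^N\) colours. By the choice of \(S_A\), there is a copy \(A'\subset S_A\) of \(A\) that is monochromatic under \(\chi'\). This means \(\chi(x,y)\) depends only on \(y\) as \(x\) ranges over \(A'\).

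Now colour \(S_B\) by \(\psi(y)=\chi(x_0,y)\) for any fixed \(x_0\in A'\). This is an \(r\)-colouring, so it contains a monochromatic copy \(B'\subset S_B\) of \(B\). Then \(A'\times B'\) is monochromatic under \(\chi\). It is congruent to \(A\times B\), because isometries \(f\) of \(\BBR^m\) and \(g\) of \(\BBR^n\) give the isometry \(f\times g\) of \(\BBR^{m+n}\), with distances combining by Pythagoras.

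None of these steps is a real obstacle. The only care needed is the order of choices: \(S_B\) must be fixed first so that \(N\), and hence the number of colours \(r^{N}\) used on \(S_A\), is determined, together with the observation that products of congruent copies are congruent in the identified space \(\BBR^{m}\times\BBR^n=\BBR^{m+n}\).
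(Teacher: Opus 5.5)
Your proof is correct. The paper gives no proof of this statement; it is quoted from Erd\H{o}s et al.\ \cite{erdos1973}. Your argument is the standard product argument from that source: fix a finite witness \(S_B\) first, take \(S_A\xto{r^{|S_B|}}A\), colour \(S_A\) by the full colour pattern of each fibre \(\{x\}\times S_B\), then finish inside one fibre.

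The product arguments the paper does write out, in Lemmas \ref{lem:monovsproper} and \ref{lem:monoprod}, use the other standard bookkeeping. They colour the outer factor \(D\) by the \emph{position} of a good substructure found in each fibre \(C\times\{d\}\), so the number of auxiliary colours is a number of copies rather than \(r^{|S_B|}\). That choice is forced there. Canonical colourings use unboundedly many colours, so neither fibre patterns nor the colour of the monochromatic copy can be recorded. The price is that Lemma \ref{lem:monoprod} can only conclude that the fibres \(\{x\}\times D'\) are identically coloured. It must then apply \(C'\MRto(A',Y)\) to a copy of \(C'\), which is why it needs \(C'\) to be Ramsey. In your bounded-colour setting, the pattern colouring avoids this and requires only that \(S_B\) be finite.

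Your construction also has a small bonus. The witness \(T=S_A\times S_B\) is a product of the two witnesses. Choosing them to be simplices (as \cite{ge2026simplicesexhibitcanonicalramsey} allows) and iterating proves the remark the paper makes without proof after this theorem: a product of simplices has a Ramsey witness which is itself a product of simplices.
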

In particular, we will use that any product of simplices is Ramsey. In fact, using the previous observation, it is possible to construct a witness that a product of simplices is Ramsey that is itself a product of simplices.

\begin{theorem}[\cite{ge2026simplicesexhibitcanonicalramsey}]\label{thm:simplicesMR}
For any two simplices \(S_1,S_2\), there is a simplex \(C\) such that
\[C\MRto (S_1,S_2).\]
\end{theorem}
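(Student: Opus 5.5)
The plan is first to reduce the asymmetric statement to the symmetric one, and then to prove the symmetric one with a Frankl--R\"odl type construction combined with a canonical hypergraph Ramsey theorem.

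\textbf{Reduction.} Place \(S_1\) and a translate of \(S_2\) in mutually orthogonal affine subspaces, displaced along one further orthogonal direction. Their union \(T\) is then affinely independent, so it is a simplex containing congruent copies of both \(S_1\) and \(S_2\). Suppose we find a simplex \(C\) with \(C\MRto T\). Then every colouring of \(C\) contains a monochromatic \(T\), and hence a monochromatic \(S_1\), or a rainbow \(T\), and hence a rainbow \(S_2\). So it suffices to show that every simplex \(T\) is canonically Ramsey, with the witness chosen to be a simplex.

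\textbf{Construction.} For this I would follow the Frankl--R\"odl proof of Theorem~\ref{thm:simplexR}, which realises \(T\) inside a highly symmetric combinatorial point set. The idea is to choose vectors whose coordinates take a few prescribed values, so that congruent copies of \(T\) correspond to combinatorial objects: \(t\)-tuples of disjoint blocks, or positions in a partite structure. If the ambient set is taken to be the vertex set of a large simplex, for example suitably weighted points indexed by an \(N\)-set, the witness is itself a simplex. That is exactly the extra property we need, and \cite{ge2026simplicesexhibitcanonicalramsey} already notes it is achievable for the Ramsey version.

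\textbf{Canonical step.} An arbitrary colouring of \(C\) induces a colouring of these combinatorial objects. Instead of the finite-colour Ramsey theorem used by Frankl and R\"odl, I would apply the Erd\H{o}s--Rado canonical Ramsey theorem, or its product/partite analogue, to this induced colouring. This yields a large substructure on which the colouring is canonical: the colour of a point depends injectively on some fixed subset \(I\) of its defining coordinates. If \(I=\emptyset\) we get a monochromatic copy of \(T\), and if \(I\) is everything we get a rainbow copy.

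\textbf{Main obstacle.} The hard part will be the intermediate canonical patterns, where points agreeing on the coordinates in \(I\) share a colour. Here I would exploit the freedom in choosing which coordinate values represent each vertex of \(T\). The aim is to show that within the canonical substructure one can always find a copy of \(T\) whose vertices pairwise differ on \(I\), giving a rainbow copy, unless \(I=\emptyset\). This needs the embedding to be flexible enough that each vertex of \(T\) can be moved independently in the \(I\)-coordinates while preserving all pairwise distances. I expect to achieve this by padding the construction with extra dummy coordinates, so that distances are governed by symmetric counts rather than by specific positions.
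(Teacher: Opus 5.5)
Your reduction is correct. Putting \(S_1\) and \(S_2\) in orthogonal affine subspaces, offset along one extra direction, gives a simplex \(T\) containing copies of both, and \(C\MRto T\) implies \(C\MRto(S_1,S_2)\). But this is an equivalence (take \(S_1=S_2\)), so it removes none of the difficulty. What remains is exactly the main theorem of \cite{ge2026simplicesexhibitcanonicalramsey}: every simplex is canonically Ramsey with a simplex witness. The paper does not prove this; it imports it. For that step your proposal is a plan rather than a proof. You never fix the point set or the canonical theorem. Also, ``replace the Ramsey step of Frankl--R\"odl by a canonical one'' has nothing to act on. Their proof for simplices is a density argument resting on their forbidden-intersection theorem, not a finite-colour Ramsey theorem. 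Density arguments do not canonise directly, because with unboundedly many colours every colour class may be negligibly small. Erd\H{o}s--Rado does canonise colourings of \(\binom{[N]}{a}\). However, the natural Euclidean image of that set (scaled indicator vectors) has squared distances proportional to \(|A\triangle B|\). So it only contains simplices whose squared edge lengths are rationally related. For a general simplex you would need another encoding plus a canonical theorem for it, and neither is given.

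The decisive gap is the case you defer: \(I\neq\emptyset\) a proper subset of the coordinates. Finding a copy of \(T\) whose vertices have pairwise distinct \(I\)-parts does not follow from canonisation. It depends on which combinatorial types realise \(T\), and it can fail. For example, take the equilateral triangle in \(\{e_x+e_y\}\subset\BBR^M\) and the canonical colouring \(\{x<y\}\mapsto x\). Every copy of type \(\{x,y\},\{x,z\},\{y,z\}\) is neither monochromatic nor rainbow; only star-type copies \(\{x,w\},\{y,w\},\{z,w\}\) can rescue you. You would need an embedding of an arbitrary simplex with such an escape route for every intermediate pattern simultaneously. ``Padding with dummy coordinates'' does not by itself provide one.

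Separately, Frankl--R\"odl-type ambient sets have far more points than dimensions, so they are not simplices. The simplex witness is best obtained afterwards. Let \(d_{ij}\) be the edge lengths of \(T\), and apply the symmetric result to the simplex \(T^-\) with squared edge lengths \(d_{ij}^2-2\lambda^2\), which is nondegenerate for small \(\lambda>0\). Then send each point \(s\) of its witness \(S\) to \((s,\lambda e_s)\in\BBR^n\times\BBR^{S}\). The resulting set is affinely independent, and it turns copies of \(T^-\) into copies of \(T\) while preserving monochromatic and rainbow copies.
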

Similarly, let \(\MR(S_1,S_2)\) be a simplex witnessing this---so \(\MR(S_1,S_2)\MRto (S_1,S_2)\). We will write \(\MR^a(S_1,S_2)\) to mean the result of repeatedly applying \(\MR(S_1,\cdot)\) to \(S_2\) \(a\) times. Note that for this to exist, it was important that \(\MR(S_1,S_2)\) could itself be taken to be a simplex.

We now describe our argument, the bulk of which is contained in three lemmas. Recall that a colouring of \(S_1\times\dots\times S_k\) is \emph{proper} if every subset of the form \(\{s_1\}\times\dots\times\{s_{i-1}\}\times S_i\times \{s_{i+1}\}\times\dots\times\{s_k\}\) is rainbow. We first prove the following lemma:

\begin{lemma}\label{lem:monovsproper}
For any simplices \(A,S_1,\dots,S_k\), there is some product of simplices \(C\) such that any colouring of \(C\) either admits a monochromatic copy of \(A\), or a copy of \(S_1\times\dots\times S_k\) whose colouring is proper.
\end{lemma}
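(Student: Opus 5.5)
The plan is to prove a stronger statement by induction on \(k\), allowing several colourings at once. For fixed simplices \(A,S_1,\dots,S_k\) and \(t\ge 1\), let \(H(k,t)\) be the claim that there is a product of simplices \(C\) with the following property. Given any \(t\) colourings \(c_1,\dots,c_t\) of \(C\), either some \(c_j\) admits a monochromatic copy of \(A\), or there is a single copy \(P\) of \(S_1\times\dots\times S_k\) inside \(C\) that is properly coloured by every \(c_j\) simultaneously. Lemma~\ref{lem:monovsproper} is \(H(k,1)\).

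The basic tool is the iterate \(\MR^a(A,S)\). It exists because \(\MR(A,\cdot)\) returns a simplex, by Theorem~\ref{thm:simplicesMR}. Suppose \(a\) colourings \(d_1,\dots,d_a\) of \(T=\MR^a(A,S)\) are given, and we process them in order.
\begin{itemize}
\item By Theorem~\ref{thm:simplicesMR}, \(d_1\) either has a monochromatic copy of \(A\) or a rainbow copy \(T'\) of \(\MR^{a-1}(A,S)\).
\item In the second case we restrict to \(T'\) and apply the same step to \(d_2\), and so on.
\end{itemize}
A subset of a rainbow set is rainbow. So after \(a\) steps either some \(d_i\) has a monochromatic copy of \(A\), or we have a copy \(U\) of \(S\) that is rainbow under all of \(d_1,\dots,d_a\). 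Taking \(C=\MR^t(A,S_1)\) gives the base case \(H(1,t)\).

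For the inductive step \(H(k-1,\cdot)\Rightarrow H(k,t)\), first construct the remaining factors.
\begin{itemize}
\item Let \(C'\) be the product of simplices given by \(H(k-1,\,t|S_1|)\) for the simplices \(S_2,\dots,S_k\).
\item Set \(T_1=\MR^{t|C'|}(A,S_1)\) and \(C=T_1\times C'\).
\end{itemize}
Given colourings \(c_1,\dots,c_t\) of \(C\), consider the \(t|C'|\) colourings of \(T_1\) defined by \(d_{j,x}(y)=c_j(y,x)\), for \(x\in C'\) and \(j\le t\). The iteration above produces one of two outcomes.
\begin{itemize}
\item Some \(d_{j,x}\) has a monochromatic copy of \(A\). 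Then \(c_j\) has a monochromatic copy of \(A\) in the slice \(T_1\times\{x\}\), which is congruent to a copy in \(C\).
\item There is a copy \(U\subset T_1\) of \(S_1\) such that \(U\times\{x\}\) is rainbow under every \(c_j\), for every \(x\in C'\).
\end{itemize}

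In the second case, apply \(H(k-1,t|S_1|)\) to the \(t|S_1|\) colourings \(e_{j,u}(x)=c_j(u,x)\) of \(C'\), for \(u\in U\) and \(j\le t\). Again there are two outcomes.
\begin{itemize}
\item Some \(e_{j,u}\) has a monochromatic copy of \(A\), which lies in the slice \(\{u\}\times C'\) under \(c_j\).
\item There is a copy \(P\) of \(S_2\times\dots\times S_k\) that is proper for every \(e_{j,u}\).
\end{itemize}
In the second case, \(U\times P\) is a copy of \(S_1\times\dots\times S_k\), and we check that it is proper for each \(c_j\).
\begin{itemize}
\item Fibres in the first coordinate have the form \(U\times\{x\}\) with \(x\in P\subset C'\), so they are rainbow by the choice of \(U\).
\item Fibres in coordinate \(i\ge 2\) lie in slices \(\{u\}\times P\), so they are rainbow by the properness of \(P\) under \(e_{j,u}\).
\end{itemize}
Hence \(C\) witnesses \(H(k,t)\). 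It is a product of simplices because \(T_1\) is a simplex and \(C'\) is a product of simplices.

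The main obstacle is an ordering problem. The copy of \(S_1\) must be chosen uniformly over fibres whose positions depend on the later choice of \(P\), and that later choice must control several colourings at once. This is exactly why the hypothesis is strengthened to \(t\) simultaneous colourings, and why \(U\) is fixed against all \(|C'|\) fibres before \(P\) is chosen. The nesting of \(\MR\) absorbs this with no pigeonhole loss, because rainbowness passes to subsets.
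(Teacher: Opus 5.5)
Your proof is correct, but it runs the inductive step in the opposite order from the paper.

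The paper keeps the one-colouring statement and sets \(C=C'\times D\) with \(D=\R\left(\MR^n(A,S_k),r\right)\). It first uses the Ramsey property of \(D\), colouring each \(d\) by the position of a properly coloured copy of \(S_1\times\dots\times S_{k-1}\) in \(C'\times\{d\}\). This fixes one copy \(X\) that works over a whole copy of \(\MR^n(A,S_k)\). Only then does it strip \(n=|X|\) layers of \(\MR(A,\cdot)\), one per point of \(X\), to make the new-coordinate fibres rainbow.

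You instead fix the copy \(U\) of the new factor first, uniformly over all \(|C'|\) fibres, by spending \(t|C'|\) layers of \(\MR(A,\cdot)\). The cost is that \(C'\) must then serve the \(t|S_1|\) colourings \(e_{j,u}\) with one common \(P\). That is exactly why you need the simultaneous statement \(H(k,t)\).

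So your route needs a stronger inductive hypothesis, but it uses nothing except Theorem~\ref{thm:simplicesMR}. There is no Ramsey witness and no pigeonholing over positions, and you get the stronger multi-colouring statement as a by-product. The paper's route proves the lemma exactly as stated, at the price of an extra Ramsey step.

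That extra ingredient is less external than it looks. Since \(\MR(B,S)\xto{r}B\) whenever \(|S|>r\), Theorem~\ref{thm:simplicesMR} already supplies Ramsey witnesses for simplices that are themselves simplices.
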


Note that \(C\) is not just any configuration but a product of simplices---in particular, this implies that \(C\) is Ramsey, which we will use later on in the argument. The proof of this lemma inducts on \(k\), building \(C\) as the product of the configuration \(C'\) constructed by applying the inductive hypothesis to \(A,S_1,\dots,S_{k-1}\) with a large simplex \(D\). We will assume that our colouring of \(C\) has no monochromatic \(A\), so in particular, every copy of \(C'\) contains a properly coloured \(S_1\times\dots\times S_{k-1}\).

We choose \(D\) to be a Ramsey witness \(\R(D',r)\), for a suitable simplex \(D'\), and for \(r\) the number of copies of \(S_1\times\dots\times S_{k-1}\) in \(C'\). Then in fact a standard product argument, considering colouring \(D\) by the position of some properly coloured \(S_1\times\dots\times S_{k-1}\) in the associated copy of \(C'\), shows that the product \(C'\times D\) must in fact contain some copy of \(S_1\times\dots\times S_{k-1}\times D'\) in which each \(S_1\times\dots\times S_{k-1}\times \{m\}\) is properly coloured.

We now wish to restrict this to a copy of \(S_k\) within \(D'\) in which each set \(\{x\}\times S_k\) is rainbow. In fact, we do this for each \(x\in S_1\times\dots\times S_{k-1}\) in turn, restricting our set \(D'\) to a smaller simplex each time to guarantee that the copy of that simplex at \(x\) is rainbow. In order to do this, we choose \(D'\) to be of the form \(\MR^n(A,S_k)\), where \(n\) is the size of \(S_1\times\dots\times S_{k-1}\). At the \(i\)th step we restrict \(D'\) to a smaller set of the form \(\MR^{n-i}(A,S_k)\), stripping off one iteration of \(\MR(A,\cdot)\) each time. After \(n\) steps, we are left with a copy of \(S_k\), which corresponds to a properly coloured copy of \(S_1\times\dots\times S_k\).

We then show that this is enough to produce a rainbow product of simplices:

\begin{lemma}\label{lem:properimpliesrainbow}
    For any simplices \(S_1,\dots,S_k\), there exist simplices \(S'_1\dots,S'_k\) such that any proper colouring of \(S'_1\times\dots\times S'_k\) contains a rainbow copy of \(S_1\times\dots\times S_k\).
\end{lemma}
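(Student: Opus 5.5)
We prove the statement by choosing copies \(T_1\subseteq S'_1,\dots,T_k\subseteq S'_k\) of \(S_1,\dots,S_k\) one coordinate at a time. At each step we apply Theorem~\ref{thm:simplexR}, in the form of the simplex witnesses \(\R(\cdot,r)\), to an auxiliary colouring coming from a bounded-degree ``conflict graph''. The key observation is this: if the colouring is proper and two points of the product differ in exactly one coordinate, they receive different colours. So every line \(\{s_1\}\times\dots\times S'_j\times\dots\times\{s_k\}\) is rainbow, and for any colour and any such line there is at most one point of the line with that colour. This will bound the degrees.

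Write \(N_{<j}=\prod_{i<j}|S_i|\) and \(M_{>j}=\prod_{i>j}|S'_i|\). The simplices are defined backwards, from \(S'_k\) down to \(S'_1\):
\[S'_k=\R\bigl(S_k,\,N_{<k}^2+1\bigr),\qquad S'_j=\R\bigl(S_j,\,N_{<j}^2M_{>j}+1\bigr)\quad(j<k).\]
This is legitimate because \(M_{>j}\) only involves simplices that have already been defined. Now fix a proper colouring \(c\) of \(S'_1\times\dots\times S'_k\). We will choose \(T_1,\dots,T_k\) in increasing order of \(j\), maintaining the following invariant \((I_j)\): on \(T_1\times\dots\times T_j\times S'_{j+1}\times\dots\times S'_k\), any two distinct points of the same colour differ in some coordinate \(>j\). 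The invariant \((I_0)\) holds vacuously. The invariant \((I_k)\) says exactly that \(T_1\times\dots\times T_k\), a copy of \(S_1\times\dots\times S_k\), is rainbow.

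For the inductive step, suppose \(T_1,\dots,T_{j-1}\) have been chosen with \((I_{j-1})\). Define a graph on \(S'_j\) by joining \(b\neq b'\) whenever there are \(x,y\in T_1\times\dots\times T_{j-1}\) and \(z\in S'_{j+1}\times\dots\times S'_k\) with \(c(x,b,z)=c(y,b',z)\). Fix \(b\), and consider a triple \((x,y,z)\) together with the colour \(c(x,b,z)\). The line \(\{y\}\times S'_j\times\{z\}\) is rainbow, so it contains at most one \(b'\) of that colour. Hence the graph has maximum degree at most \(N_{<j}^2M_{>j}\) (read \(M_{>k}=1\)), and greedy colouring gives a proper vertex colouring of \(S'_j\) with \(N_{<j}^2M_{>j}+1\) colours. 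By the choice of \(S'_j\) there is a monochromatic copy \(T_j\) of \(S_j\), i.e.\ an independent set in the conflict graph.

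We now check \((I_j)\). Take two distinct points \((x,b,z)\) and \((y,b',w)\) of the same colour with \(b,b'\in T_j\), and suppose for contradiction that they agree in all coordinates \(>j\), so \(z=w\). If \(b\neq b'\), then \(b\) and \(b'\) are adjacent, contradicting independence. If \(b=b'\), the two points agree in all coordinates \(\ge j\) and so, being distinct, differ only in coordinates \(<j\); this contradicts \((I_{j-1})\). Hence \((I_j)\) holds.

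The main obstacle is identifying the right invariant: the conflicts must be controlled only between points agreeing ``to the right'' of the current coordinate. Once that is in place, properness supplies the degree bound, and the only delicate point left is the backwards order in which the \(S'_j\) must be defined so that each Ramsey parameter is finite. Note also that each \(S'_j\) is a simplex, as supplied by the observation following Theorem~\ref{thm:simplexR}.
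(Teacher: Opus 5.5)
Your proof is correct. It shares the paper's core device: properness makes every line rainbow, which bounds the degree of a conflict graph, and a simplex Ramsey witness then yields an independent monochromatic copy. The organisation differs, though.

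The paper inducts on \(k\) and uses the lemma for \(k-1\) as a black box. Each slice \(P'\times\{y\}\) is properly coloured, so it contains some rainbow copy \(X_y\) of \(P\), and this copy may differ from slice to slice. The paper absorbs that by pigeonhole, colouring \(y\) by the pair \((X_y,c_{G(X_y)}(y))\) with \(rm\) colours, where \(r\) is the number of copies of \(P\) in \(P'\). Conflict graphs are built only on the new coordinate.

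Your step \(j=k\) is exactly the paper's final step with \(X=T_1\times\dots\times T_{k-1}\). The difference is that you guarantee in advance that one fixed \(X\) is rainbow in every slice, by choosing each earlier \(T_j\) uniformly over all values \(z\) of the later coordinates. The cost is the factor \(M_{>j}\) in the degree bound, which is why your \(S'_j\) must be defined backwards from \(S'_k\). The gains are that you never need to count congruent copies of \(P\) inside \(P'\), and you get a stronger, axis-aligned conclusion: the rainbow copy has the form \(T_1\times\dots\times T_k\) with \(T_i\subseteq S'_i\). Conversely, the paper's forward induction reuses \(S'_1,\dots,S'_{k-1}\) unchanged when a factor is added, whereas in your scheme every \(S'_j\) changes because \(M_{>j}\) does.

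A minor point: triples with \(x=y\) never create edges, by properness. So your degree bound can be \(N_{<j}(N_{<j}-1)M_{>j}\), matching the paper's \(|P|(|P|-1)\). In particular, for \(j=1\) your conflict graph is empty, so you could take \(S'_1=S_1\) as the paper does.
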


Again we induct on \(k\). Writing \(P=S_1\times\dots\times S_{k-1}\), we assume from our inductive hypothesis that there is some \(P'=S'_1\times\dots\times S'_{k-1}\) such that any proper colouring of \(P'\) contains a rainbow copy of \(P\). We will construct \(S'_k\) so that \(P'\times S'_k\) satisfies the statement of the lemma.

We will take \(S'_k\) to be a Ramsey witness \(\R(S_k,rm)\), for \(r\) the number of copies of \(P\) within \(P'\), and \(m=|P|^2-|P|+1\). Note that it is important here that we allowed our Ramsey witnesses to be simplices themselves for this. We then build an auxiliary colouring with \(rm\) colours on \(S'_k\) as follows: for each copy \(X\) of \(P\) within \(P'\), we build an auxiliary graph on the vertices of \(S'_k\) in which two vertices are joined if their corresponding copies of \(X\) share a colour. We show that this graph has maximum degree \(m-1\), and therefore has an \(m\)-colouring \(c_{G(X)}\).

For each point \(y\) of \(S'_k\), we note that \(P'\times\{y\}\) contains a rainbow copy of \(P\), and choose some such copy \(X\) of \(P\). We then assign \(y\) the colour \(\left(X,c_{G(X)}(y)\right)\). This is an \(rm\)-colouring, so there is some monochromatic copy of \(S_k\). But then this corresponds to some \(P\times S_k\) in which each copy of \(P\) is rainbow, and no two copies of \(P\) share a colour---hence this is itself a rainbow copy of \(P\times S_k\).

Finally, we prove a lemma which will allow us to move from finding monochromatic simplices to monochromatic products of simplices, using a product argument.

\begin{lemma}\label{lem:monoprod}
Suppose that for some configurations \(C,C',A,A',Y\), we have both \(C\MRto (A,Y)\) and \(C'\MRto(A',Y)\), and that additionally \(C'\) is Ramsey. Then there is a configuration \(C''\) such that \[C''\MRto(A\times A',Y).\]
\end{lemma}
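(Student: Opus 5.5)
We take \(C''=C\times D\), where \(D\) is a Ramsey witness for \(C'\) with a suitable number of colours. Let \(r\) be the number of subsets of \(C\) congruent to \(A\), and choose a finite set \(D\) with \(D\xto{r} C'\); this exists because \(C'\) is Ramsey. Fix an arbitrary colouring \(\chi\) of \(C\times D\) with no rainbow copy of \(Y\). We aim to find a monochromatic copy of \(A\times A'\).

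\textbf{Step 1: colour \(D\).} For each \(d\in D\), the fibre \(C\times\{d\}\) is congruent to \(C\). It contains no rainbow \(Y\), because any rainbow \(Y\) inside it would also be a rainbow \(Y\) in \(C\times D\). Since \(C\MRto(A,Y)\), the fibre therefore contains a monochromatic copy of \(A\), of the form \(A_d\times\{d\}\) with \(A_d\subseteq C\) congruent to \(A\). Choose one such \(A_d\) for each \(d\) and colour \(d\) by it. This is an \(r\)-colouring of \(D\), so there is a copy \(C'_0\subseteq D\) of \(C'\) on which \(A_d=A_0\) is constant.

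\textbf{Step 2: colour \(C'_0\).} Each set \(A_0\times\{d\}\), for \(d\in C'_0\), is monochromatic, say of colour \(\psi(d)\). This defines a colouring \(\psi\) of \(C'_0\), which is congruent to \(C'\). We apply \(C'\MRto(A',Y)\) to \(\psi\), which gives two cases.

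In the first case there is a \(\psi\)-monochromatic copy \(A'_0\subseteq C'_0\) of \(A'\). Then \(A_0\times A'_0\) is monochromatic under \(\chi\), and it is a copy of \(A\times A'\), because a product of an isometric copy of \(A\) in one factor with an isometric copy of \(A'\) in the other is congruent to \(A\times A'\).

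In the second case there is a \(\psi\)-rainbow copy \(Y_0\subseteq C'_0\) of \(Y\). Fix any \(a\in A_0\). Then \(\{a\}\times Y_0\) is congruent to \(Y\), and \(\chi(a,d)=\psi(d)\), so it is rainbow under \(\chi\). This contradicts our assumption.

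Hence \(C\times D\MRto(A\times A',Y)\). The main point to get right is the choice of the auxiliary colouring, namely colouring \(d\) by the position of a monochromatic \(A\), and not by its colour. Colouring by position needs only the finite number \(r\) of colours, whereas colouring by colour would need unboundedly many, which the Ramsey property of \(C'\) cannot handle. The only other thing to check is that congruent copies embed correctly in the product. This holds because we identify \(\BBR^n\times\BBR^m\) with \(\BBR^{n+m}\), and isometries of the two factors combine to an isometry of the product.
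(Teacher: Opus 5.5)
Your proposal is correct and is essentially the paper's own proof. You take \(C''=C\times D\) with \(D\xto{r}C'\), colour each \(d\) by the position of a monochromatic \(A\) in its fibre, and apply \(C'\MRto(A',Y)\) to the common colouring of the sets \(\{x\}\times D'\). Your explicit colouring \(\psi\), together with the rainbow-\(Y\) contradiction, just spells out the paper's remark that all these sets are coloured identically and so contain no rainbow \(Y\).
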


We require \(C'\) to be Ramsey in order to choose \(C''\) of the form \(C\times D\), for \(D\) a suitable Ramsey witness \(\R(C',r)\), where \(r\) is the number of copies of \(A\) within \(C\). Consider a colouring of \(C''\) with no rainbow copy of \(Y\). We then follow a product argument, producing a copy of \(A\times C'\) in which all copies of \(C'\) have the same colouring. This gives a monochromatic copy of \(A\times A'\), as desired.

We then put these three lemmas together to prove Theorem \ref{thm:prodsimpMR}. Composing Lemmas \ref{lem:monovsproper} and \ref{lem:properimpliesrainbow} allows us to construct, for any simplices \(A,S_1,\dots,S_k\), a product of simplices \(C\) with 
\[C\MRto (A,S_1,\dots,S_k).\]
Since the \(C\) this generates is Ramsey, we may use Lemma \ref{lem:monoprod} to combine these configurations for \(A=S_1,\dots,S_k\) in turn. The resulting configuration will then witness that \(S_1\times\dots\times S_k\) is canonically Ramsey, as desired.

\section{Three important lemmas}
Recall that we say a colouring of \(S_1\times\dots\times S_k\) is \emph{proper} if any two points \((s_1,\dots,s_k),(s'_1,\dots,s'_k)\in S_1\times\dots\times S_k\) that have \(s_i\neq s'_i\) for exactly one \(i\) are differently coloured---equivalently, every copy of \(S_i\) with the form \(\{s_1\}\times\dots\times\{s_{i-1}\}\times S_i\times \{s_{i+1}\}\times\dots\times\{s_k\}\) is rainbow. We first prove Lemma \ref{lem:monovsproper}, which we now restate:
\begin{replemma}{\ref{lem:monovsproper}}
For any simplices \(A,S_1,\dots,S_k\), there is some product of simplices \(C\) such that any colouring of \(C\) either admits a monochromatic copy of \(A\), or a copy of \(S_1\times\dots\times S_k\) whose colouring is proper.
\end{replemma}
\begin{proof}
    We prove this by induction on \(k\). Notice that the base case \(k=1\) is simply Theorem \ref{thm:simplicesMR}. Henceforth, we may assume that \(k\geq 2\), and that the theorem holds for \(k-1\). Let \(C'\) be the configuration given by the inductive hypothesis, so that any colouring of \(C'\) contains either a monochromatic copy of \(A\) or a properly coloured copy of \(S_1\times\dots\times S_{k-1}\).
    
    Recall that we write \(\MR^a(A,X)\) to mean the result of applying \(\MR(A,\cdot)\) to \(X\) \(a\) times---so for example, \(\MR^2(A,X)=\MR(A,\MR(A,X))\). Let \(r\) be the number of copies of \(S_1\times\dots\times S_{k-1}\) in \(C'\), let \(n=|S_1\times\dots\times S_{k-1}|\), and write \[D=\R\left(\MR^n(A,S_k),r\right).\]
    Our final configuration will be \(C=C'\times D\). Note that we made use of the fact that \(\MR^a(\cdot,\cdot)\) is still a simplex, and therefore Ramsey. In fact, by \cite{ge2026simplicesexhibitcanonicalramsey}, we assume \(D\) is a simplex, so that our final \(C\) will itself be a product of simplices.

    We now fix some colouring of \(C\), which we assume contains no monochromatic copy of \(A\). It will suffice to prove this contains a properly coloured copy of \(S_1\times\dots\times S_k\). From our inductive hypothesis, each copy of \(C'\) in the product \(C=C'\times D\) contains a properly coloured copy of \(S_1\times \dots \times S_{k-1}\). We build an auxiliary colouring of \(D\) which colours each \(d\in D\) by the set of vertices of some properly coloured copy of \(S_1\times \dots\times S_{k-1}\) in \(C'\times \{d\}\). Now by the choice of \(r\), this is an \(r\)-colouring of \(D\). Hence by the definition of \(D\), there is some copy \(D'\) of \(\MR^n(A,S_k)\) in \(D\) which is monochromatic in this auxiliary colouring---that is to say, there is a fixed copy \(X\) of \(S_1\times\dots\times S_{k-1}\) such that every set of the form \(X\times\{d\}\) for \(d\in D'\) is already properly coloured.

    Now let the points of \(X\) be \(x_1,\dots,x_n\) in some order. Let \(D'_0=D'\), and notice that \(D'_0\) is a copy of \(\MR^n(A,S_k)\). We now choose \(D'_1,\dots,D'_n\) sequentially such that each \(D'_i\) is a copy of \(\MR^{n-i}(A,S_k)\), and within \(D'_i\), for each \(j\leq i\), the set \(\{x_j\}\times D'_i\) is rainbow. It will suffice to choose \(D'_i\subset D'_{i-1}\) such that \(\{x_i\}\times D'_i\) is rainbow. But by construction, we know \(D'_{i-1}\) is a copy of
    \[\MR\left(A,\MR^{n-i}(A,S_k)\right).\]
    But since we have assumed that \(C\) has no monochromatic copy of \(A\), certainly \(\{x_i\} \times D'_{i-1}\) contains some rainbow copy of \(\MR^{n-i}(A,S_k)\). Take this to be \(\{x_i\}\times D'_i\).

    Now \(D'_n\) is a copy of \(S_k\) such that each set of the form \(\{x\}\times D'_n\), for \(x\in X\), is rainbow. But then \(X\times D'_n\) is a properly coloured copy of \(S_1\times\dots\times S_k\), as desired.
\end{proof}

We now show that being able to construct properly coloured products of simplices is sufficient to construct rainbow products of simplices, proving Lemma \ref{lem:properimpliesrainbow}, which we now restate:
\begin{replemma}{\ref{lem:properimpliesrainbow}}
    For any simplices \(S_1,\dots,S_k\), there exist simplices \(S'_1\dots,S'_k\) such that any proper colouring of \(S'_1\times\dots\times S'_k\) contains a rainbow copy of \(S_1\times\dots\times S_k\).
\end{replemma}
\begin{proof}
    We prove this by induction on \(k\). For \(k=1\), we simply take \(S'_1=S_1\), as proper colourings of a single simplex are exactly rainbow colourings. We may therefore assume that \(k\geq 2\), and that the theorem holds for \(k-1\). Let \(S'_1,\dots,S'_{k-1}\) be the simplices given by the inductive hypothesis applied to \(S_1,\dots,S_{k-1}\), and let \(P=S_1\times\dots\times S_{k-1}\), and \(P'=S'_1\times\dots\times S'_{k-1}\).

    Let \(r\) be the number of copies of \(P\) in \(P'\), and recall that for any simplex \(S\), there is some simplex \(\R(S,r)\) such that any \(r\)-colouring of \(\R(S,r)\) contains a monochromatic \(r\). Write \(m=|P|^2-|P|+1\), and let \(S'_k=\R\left(S_k,rm\right)\).

    Given a proper colouring of \(P'\times S'_k\), we now find a rainbow copy of \(P\times S_k\) within it. For each copy \(X\) of \(P\) in \(P'\), consider for each point \(y\in S'_k\) the set of colours assigned to \(X\times \{y\}\), and build an auxiliary graph \(G(X)\) on the vertices of \(S'_k\) where \(y\) and \(y'\) are joined by an edge if some colour appears in both \(X\times \{y\}\) and \(X\times \{y'\}\).
    
    Since \(P'\times S'_k\) is properly coloured, each set \(\{x\}\times S'_k\) is rainbow, and so each colour appears at most \(|P|\) times in \(X\times S'_k\). But now each colour of \(X\times \{y\}\) appears in at most \(|P|-1\) other such sets, and so since there are at most \(|P|\) colours in \(X\times \{y\}\), the graph \(G(X)\) has maximum degree at most \(|P|(|P|-1)\). It therefore has a proper vertex colouring \(c_{G(X)}\) with at most \(m\) colours.

    We now build an auxiliary colouring of \(S'_k\) as follows: for each point \(y\) of \(S'_k\), choose some rainbow copy \(X\) of \(P\) within \(P'\times \{y\}\), and let \(y\) be coloured by the ordered pair \(\left(X,c_{G(X)}(y)\right)\). Certainly this uses at most \(rm\) colours. Hence there is some copy \(Y\) of \(S_k\) that is monochromatic under this auxiliary colouring, say with colour \((X,i)\). But now for each \(y\in Y\), \(X\times \{y\}\) is rainbow, and since \(c^{-1}_{G(X)}(i)\) is an independent set of \(G(X)\), the sets of colours used in each \(X\times \{y\}\) are disjoint. Hence \(X\times Y\) is a rainbow copy of \(P\times S_k\), as desired.
\end{proof}

Finally, we restate and prove Lemma \ref{lem:monoprod}, allowing us to seek monochromatic copies of products of simplices.

\begin{replemma}{\ref{lem:monoprod}}
Suppose that for some configurations \(C,C',A,A',Y\), we have both \(C\MRto (A,Y)\) and \(C'\MRto(A',Y)\), and that additionally \(C'\) is Ramsey. Then there is a configuration \(C''\) such that \[C''\MRto(A\times A',Y).\]
\end{replemma}
\begin{proof}
    Let \(r\) be the number of copies of \(A\) in \(C\), and let \(D\) be some configuration such that \(D\xto{r}C'\). We write \(C''=C\times D\), and show that any colouring of \(C''\) that does not contain a rainbow copy of \(Y\) must contain a monochromatic copy of \(A\times A'\). Fix some colouring of \(C\times D\) with no rainbow copy of \(Y\), and notice that each set with form \(C\times \{d\}\) must contain a monochromatic copy of \(A\). Consider the auxiliary colouring of \(D\) that colours each \(d\in D\) by the position of that monochromatic copy of \(A\) within \(C\). This is certainly an \(r\)-colouring, so there must be some copy \(D'\) of \(C'\) within \(D\) which is monochromatic under this colouring---that is to say, there is some copy \(X\) of \(A\) within \(C\) such that each set \(X\times \{d\}\), with \(d\in D'\), is monochromatic
    
    But now every set of the form \(\{x\}\times D'\), with \(x\in X\), receives exactly the same colouring. We have already assumed this does not contain a rainbow copy of \(Y\), so as \(D'\) is a copy of \(C'\), it must contain a monochromatic copy \(X'\) of \(A'\). But now \(X\times X'\) is a monochromatic copy of \(A\times A'\), giving the desired result.
\end{proof}

Recall that we may choose a Ramsey witness for a product of simplices to be a product of simplices itself. We therefore note that in this lemma, if \(C\) and \(C'\) are both products of simplices, then we may in fact choose \(C''\) also to be a product of simplices.

\section{Proof of Theorem \ref{thm:prodsimpMR}}
We now prove our main theorem.
\begin{proof}[Proof of Theorem \ref{thm:prodsimpMR}]
    We prove that for any simplices \(S_1,\dots,S_k\), the product \(S_1,\dots, S_k\) is canonically Ramsey. First we show that that for each simplex \(A\), there is some product of simplices \(C(A)\) such that
    \[C(A)\MRto\left(A,S_1\times\dots\times S_k\right).\]
    By Lemma \ref{lem:properimpliesrainbow}, there are simplices \(S'_1,\dots,S'_k\) such that any proper colouring of \(S'_1,\times\dots\times S'_k\) contains a rainbow copy of \(S_1\times\dots\times S_k\). But now applying Lemma \ref{lem:monovsproper} to \(A, S'_1,\dots,S'_k\), we obtain a product of simplices \(C(A)\) such that any colouring of \(C(A)\) without a monochromatic copy of \(A\) must have a properly coloured copy of \(S'_1,\times\dots\times S'_k\)---but then this contains a rainbow copy of \(S_1\times\dots\times S_k\), as desired.

    Now let \(C_1=C\left(S_1\right)\). We now build configurations \(C_i\) with \[C_i\MRto \left(S_1\times\dots\times S_i,S_1\times\dots\times S_k\right).\]
    But we can simply do this by applying Lemma \ref{lem:monoprod} to \(C_{i-1}\) and \(C(S_i)\) in sequence, noting that \(C(S_i)\) is Ramsey because it is a product of simplices. Then certainly we have
    \[C_k\MRto(S_1\times\dots\times S_k,S_1\times\dots\times S_k),\]
    witnessing the fact that \(S_1\times\dots\times S_k\) is canonically Ramsey.
\end{proof}

Indeed, since each \(C(A)\) is a product of simplices, it is possible to choose our witness \(C_k\) to be a product of simplices itself.

\section{Concluding remarks}
Fang, Ge, Shu, Xu, Xu, and Yang \cite{fang2025canonicalramseytrianglesrectangles} conjectured that all Ramsey sets should also be canonically Ramsey. Certainly Theorem \ref{thm:prodsimpMR} shows that a wide class of Ramsey sets are also canonically Ramsey. The next interesting positive case to check, therefore, is the following:
\begin{question}\label{q:pentagon}
    Is the set of vertices of a regular pentagon canonically Ramsey?
\end{question}
We note that the octahedron is already known to be canonically Ramsey, as it is contained in a \(4\)-dimensional hypercube and thus follows from the earlier result of Geh\'er, Sagdeev, and T\'oth \cite{geher2026}. Hence to prove any further platonic solids are canonically Ramsey requires an answer to Question \ref{q:pentagon}. Other regular cross-polytopes can be found similarly in large hypercubes, and therefore are also canonically Ramsey.

Conversely, the author asked in \cite{randallshaw2026cuboidscanonicallyramsey} whether all canonically Ramsey sets are Ramsey---that is, whether there is some set that is canonically Ramsey but not Ramsey. This question remains open. It was motivated by the fact that any set \(C\) contained in arbitrarily large canonically Ramsey sets \(C'\) must itself be Ramsey, as any set \(S\) witnessing that \(C'\) is canonically Ramsey must certainly have \(S\xto{r}C'\) for all \(r<|C'|\). Hence if, for example, the family of canonically Ramsey sets were closed under products, then all canonically Ramsey sets would be Ramsey. Indeed, even a result allowing us to add a single point to a canonically Ramsey set to create a new one would entail that all canonically Ramsey sets were Ramsey. Any such result would be of great interest.

There are sets known not to be canonically Ramsey---a spherical colouring with only three colours by Erd\H{o}s, Graham, Montgomery, Rothschild, Spencer, and Straus \cite{erdos1973} shows that the set \(\{0,1,2,3\}\) is not canonically Ramsey. The question of whether the set \(\{0,1,2\}\) is canonically Ramsey remains open---if it were, it would show that not all canonically Ramsey sets are Ramsey, and thus rule out any possible results allowing us to generate larger canonically Ramsey sets around existing ones. However, a colouring of \(\BBR^n\) containing neither monochromatic nor rainbow copies of \(\{0,1,2\}\) would have to be very carefully structured---in particular, Cheng and Xu \cite{cheng2023euclideangallairamseyvariousconfigurations} note that it could not be spherical. Thus a resolution of whether \(\{0,1,2\}\) is canonically Ramsey would be highly interesting either way.

\section{Acknowledgement}
The author is funded by an Internal Graduate Studentship of Trinity College, Cambridge. We thank Gaia Carenini for many helpful comments, and in particular for suggesting a better method for Lemma \ref{lem:properimpliesrainbow}.

	\bibliographystyle{plain}
\bibliography{main}

\end{document}